\documentclass[a4paper]{article}

\usepackage{amssymb} \usepackage{amsmath}


\newtheorem{thm}{Theorem}[section]

\newtheorem{prop}[thm]{Proposition}
\newtheorem{defn}[thm]{Definition\rm}
\newtheorem{rem}{\it Remark\/}
\newtheorem{exmp}[thm]{Example}
\newtheorem{conj}{\it Conjecture\/}
\newtheorem{proof}{Proof}


\setcounter{equation}{0}



\DeclareMathAlphabet{\mathbbb}{U}{bbold}{m}{n}
\DeclareMathAlphabet{\mathmanual}{U}{manfnt}{m}{n}

\newcommand{\calA}{\mathcal{A}} 
 
 \newcommand{\I}{\mathbbb{1}}

\newcommand{\Sim}{\Sigma}

\newcommand{\Hom}{\mathop{\mathrm{Hom}}}
\newcommand{\End}{\mathop{\mathrm{End}}}

\newcommand{\Q}{\mathbb{Q}}

\newcommand{\tens}{\otimes}



\def\c#1{\mathop{ {\mathcal #1}  }\nolimits}

\def\ch{\mathop{\mathcal Ch}\nolimits}
\def\cch{\mathop{\mathcal CCh}\nolimits}

\def\ch-{\mathop{{\mathcal Ch}_-}\nolimits}
\def\cch-{\mathop{{\mathcal CCh}_-}\nolimits}
\def\ch+{\mathop{{\mathcal Ch}_+}\nolimits}
\def\cch+{\mathop{{\mathcal CCh}_+}\nolimits}

\def\cid#1{\mathop{{\rm Id}_{#1}}\nolimits}



\def\QQ{{\mathbb Q}}

\def\fraz#1 #2 {\frac{#1}{#2}}

\def\h#1{{\kern .07em}^h\kern-.04em{#1}}
\def\a#1{{\kern .07em}{#1}_a}

\let\sem=\bf



\def\To{\longrightarrow}


\begin{document}

\title{Schur finiteness and nilpotency}

\author{Alessio Del Padrone\\delpadro@dima.unige.it\\Carlo Mazza\\carlo@math.ias.edu}

\date{July 5th, 2005}

\maketitle

\begin{abstract}
  Let $\mathcal{A}$ be a $\Q$-linear pseudo-abelian rigid tensor
  category. A notion of finiteness due to Kimura and (independently)
  O'Sullivan guarantees that the ideal of numerically trivial
  endomorphism of an object is nilpotent. We generalize this result
  to special Schur-finite objects.
In particular, in the category of Chow motives,
if $X$ is a smooth projective variety
which satisfies the homological sign conjecture, then
Kimura-finiteness, a special Schur-finiteness,
and the nilpotency
of $CH^{ni}(X^i\times X^i)_{num}$ for all $i$ (where $n=\dim X$)
are all equivalent.
\end{abstract}


%
%

Let $\mathcal{A}$ be a {\sem pseudo-abelian tensor category},
i.e., a ``$\tens$-cat\'egorie rigide sur $F$''
as in \cite[2.2.2]{andremotifs}
in which idempotents split.
We have
$F$-linear {\sem trace} maps ${\rm tr}\colon \End_{\c{A}}(A)\To \End_{\c{A}}(\I)$
compatible with
$\otimes$-functors,
and
$F$-submodules of {\sem numerically trivial morphisms}
$
\c{N}(A_1,A_2):=\{f\in\Hom_{\c{A}}(A_1,A_2)\mid {\rm tr}(f\circ g)=0,\;\; {\rm for\;\;all}\;\;
g\in\Hom_{\c{A}}(A_2,A_1)\}.$
We assume that $F=\End_{\c{A}}(\I)$ and it
contains $\QQ$. If $F$ is a field, $\c{N}$ is the
biggest non trivial $\otimes$-ideal of $\c{A}$,
and so it contains any morphism annihilated
by some $\otimes$-functor.\\

\begin{exmp}\label{examplemotives}\cite[Ch. 4]{andremotifs}
Assume $F$ is a field. 
For any admissible equivalence $\sim$ on algebraic cycles, motives of
smooth projective varieties over a field $k$
with coefficients in $F$
form such a category $\c{A}:=\c{M}_{\sim}(k)_F$.
If $X$ is a variety, we write
$\mathfrak{h}(X)$ for its motive.
For any $f\in\mathrm{End}_{\mathcal{A}}(\mathfrak{h}(X))$, 
$
{\rm tr}(f)={\rm deg}(\Gamma_f\cdot \Delta_X)
$
and therefore
$
\c{N}(\mathfrak{h}(X))=\c{Z}^{{\rm dim}(X)}_\sim(X\times X)_{F,\rm num}
$
(numerically trivial
correspondences of degree zero).
If $\sim$ is finer than homological equivalence
then any Weil cohomology $H$ factors through a $\tens$-functor on 
$\mathcal{A}$, and
$
{\rm tr}(f)=
\sum_{j} (-1)^j{\rm Tr}(f|H^j(X))
$
by the Lefschetz
formula.\\
\end{exmp}

Recall that the partitions $\lambda$ of an integer $n$
give a complete set of mutually orthogonal central idempotents
$ d_\lambda:=\frac{\dim V_\lambda}{n!}\sum_{\sigma\in \Sigma_n}
\chi_\lambda(\sigma)\sigma$
in the group algebra $\Q\Sim_n$ (see \cite{FH}).
We define an endofunctor on $\c{A}$ by setting $S_\lambda(A)=d_\lambda(A^{\tens n})$. This is a multiple of the classical Schur functor corresponding to $\lambda$. In particular, we define $\mathrm{Sym}^n(A)=S_{(n)}(A)$
and $\Lambda^n(A)=S_{(1^n)}(A)$. 
The following definitions are directly inspired by
\cite{delschur} and \cite{kimura} (see \cite{yveskahn},
\cite{gp2}, and \cite{maz} for further reference).\\

\begin{defn}
An object $A$ of $\calA$ is \textbf{Schur-finite} if there
is a partition $\lambda$ such that
$S_\lambda(A)=0$.
If $S_\lambda(A)=0$ with 
$\lambda$ of the form $(n)$ (respectively, $\lambda=(1^n)$) then $A$ is called
\textbf{odd} (respectively, \textbf{even}).
We say that $A$ is \textbf{Kimura-finite}
if $A=A_+\oplus A_-$ with $A_+$ even and $A_-$ odd.\\
%
\end{defn}

Every Kimura-finite object is Schur-finite, but the converse 
fails, for example, in the category of super-representations of $GL(p|q)$.
In \cite[7.5]{kimura} and \cite[9.1.14]{yveskahn}
it was proven that if $A$ is a Kimura-finite object then
the ideal $\mathcal{N}(A)$ is nilpotent.

In the case of example \ref{examplemotives}, an interesting
consequence of the nilpotence of $\mathcal{N}(M)$ is
that a summand $N$ of $M$ is zero if and only if its
cohomology is zero (the idempotent defining $N$ must then be nilpotent).
The nilpotency was used in \cite[Theorem 7]{gp2} to
show the equivalence of Bloch's conjecture for a smooth projective suface $X$ with $p_g=0$
and the Kimura-finiteness of the motive of $X$, improving \cite[7.7]{kimura}.

Albeit in general Schur-finiteness
is not sufficient to get the nilpotency of $\mathcal{N}(A)$
(see \cite[10.1.1]{yveskahn}),
we will identify additional conditions which imply the nilpotency.
In the category of motives we will show that for a motive
which is Kimura-finite modulo homological equivalence, the Kimura-finiteness
modulo rational equivalence is equivalent to the Schur-finiteness
for a particular rectangle.

\section{A technical result}

\begin{thm}\label{mainthm}
Suppose that $S_\lambda(A)=0$ for a partition
$\lambda$ of $n\geq 2$ with $a_\lambda$ rows and $b_\lambda$ columns. Let
$s:=a_\lambda+b_\lambda-1$ be the length of its biggest hook $\nu$, and $r:=n-s$.
Assume that either $\lambda$ is a hook or
that there is a $g\in\End_{\c{A}}(A)$ with trace
$t:=\mathrm{tr}(g)=\dots=\mathrm{tr}(g^{\circ r})$, and
$t\not\in  \{-(b_\lambda-2),\dots,a_\lambda-2\}$. Then
$f^{\circ (s-1)}=0$ for each $f\in\mathcal{N}(A)$, and so 
$\mathcal{N}(A)$ is nilpotent.
\end{thm}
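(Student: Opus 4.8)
The plan is to reduce the nilpotency of $\mathcal{N}(A)$ to a statement about the vanishing of a single Schur functor applied to an auxiliary object built from $f$ itself. The starting point is the observation, already implicit in Kimura's and Kahn's arguments, that for $f \in \mathcal{N}(A)$ one wants to detect when some power $f^{\circ m}$ is forced to vanish by representation-theoretic constraints. First I would recall that $S_\lambda(A) = 0$ means the idempotent $d_\lambda$ kills $A^{\otimes n}$, and that the partitions $\mu$ with $S_\mu(A) \neq 0$ form a ``lower set'' complement: once $S_\lambda(A) = 0$, then $S_\mu(A) = 0$ for every $\mu \supseteq \lambda$, and in particular $A$ cannot support a large alternating or symmetric power in the relevant direction. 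The key numerical input is that $\lambda$ has $a_\lambda$ rows and $b_\lambda$ columns, so the biggest hook $\nu$ has length $s = a_\lambda + b_\lambda - 1$, and $r = n - s$ counts the ``rectangle'' left over after removing the hook.

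Second, I would form, for a fixed $f \in \mathcal{N}(A)$, the graded object or the ``ghost'' endomorphism whose symmetric/exterior power behavior encodes $f^{\circ k}$; concretely, one studies the action of $f$ on $A^{\otimes n}$ via $f \otimes \cdots \otimes f$ composed with symmetrizers, and uses that $\mathrm{tr}(f \circ g) = 0$ for all $g$ to kill the ``principal'' terms in a Newton-identity-type expansion. The mechanism is that the characteristic-polynomial-like relation satisfied by $f$ in $\End(S_\mu(A))$, for the largest $\mu$ with $S_\mu(A) \neq 0$, combined with numerical triviality, pins down all the power-sum traces $\mathrm{tr}(f^{\circ j})$ to be zero, and then a Cayley–Hamilton / Newton's-identities argument over a $\Q$-algebra forces $f^{\circ(s-1)} = 0$. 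The role of the hypothesis on $g$ with $\mathrm{tr}(g) = \cdots = \mathrm{tr}(g^{\circ r}) = t$ and $t \notin \{-(b_\lambda - 2), \dots, a_\lambda - 2\}$ is exactly to guarantee that the relevant scalar (a value of a Schur polynomial, or a dimension count $\dim S_\nu$ evaluated at the ``eigenvalues'' $t$) is \emph{nonzero}, so that one can divide by it and conclude; the hook case is the degenerate case where $r = 0$ and no such $g$ is needed because there is nothing to correct for.

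Third, I would carry out the reduction from ``$f^{\circ(s-1)} = 0$ for every $f \in \mathcal{N}(A)$'' to ``$\mathcal{N}(A)$ is nilpotent as an ideal.'' This is the standard step: $\mathcal{N}$ is a $\otimes$-ideal, so a composite $f_1 \circ f_2 \circ \cdots$ of numerically trivial endomorphisms is again numerically trivial, and an ideal in which every element has a uniformly bounded nilpotency index is itself nilpotent (with index at most $s-1$). I would cite or reprove the elementary fact that a two-sided ideal $I$ in a ring with $x^N = 0$ for all $x \in I$ satisfies $I^{2^N} = 0$ or similar—actually in this $\Q$-linear, trace-compatible setting one gets the cleaner bound directly.

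The hard part will be the second step: making the passage from ``all power traces of $f$ vanish'' to ``$f^{\circ(s-1)} = 0$'' fully rigorous, since $f$ does not literally have eigenvalues and one is working in an abstract pseudo-abelian category. The right tool is to apply $S_\lambda$ (or rather the Schur functors $S_\mu$ for $\mu$ just below $\lambda$ in dominance) to the ``companion'' object $\mathbf{1} \oplus (A, f)$ in an appropriate sense, or to use the splitting principle of Kimura/Deligne to reduce to the case where $A = A_+ \oplus A_-$ with $f$ strictly upper triangular. The numerical condition on $t$ is precisely what ensures the pivotal coefficient—a specialization of the Schur function $s_\nu$ at the multiset of traces—does not land in the bad set of integers $\{-(b_\lambda-2), \dots, a_\lambda - 2\}$ where it would vanish, which is the obstruction one must navigate around. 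Once that coefficient is invertible in $F \supseteq \Q$, the Cayley–Hamilton relation in $\End(S_\lambda(A)) = 0$ collapses to $f^{\circ(s-1)} = 0$, and the ideal-nilpotency conclusion follows formally.
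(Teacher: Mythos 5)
There is a genuine gap at the heart of your second step. For $f\in\mathcal{N}(A)$ the vanishing of all power traces $\mathrm{tr}(f^{\circ j})$ is automatic (take $g=f^{\circ(j-1)}$ in the definition of $\mathcal{N}$), and in a general pseudo-abelian tensor category this by itself never forces nilpotency: there is no Cayley--Hamilton or Newton-identities mechanism available for an abstract endomorphism, and indeed the example recalled in the paper (a nonzero Schur-finite $A'$ with $\mathcal{N}(A')=\End_{\mathcal{A}}(A')$) shows that ``all power traces of $f$ vanish'' plus Schur-finiteness alone cannot yield $f^{\circ(s-1)}=0$. Likewise the splitting principle you invoke (reduce to $A=A_+\oplus A_-$ with $f$ strictly upper triangular) is only available for Kimura-finite objects, which is not a hypothesis of the theorem; relying on it would be circular or simply unavailable. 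What actually replaces Cayley--Hamilton in the paper is the concrete identity coming from $S_\lambda(A)=0$, namely $\sum_{\sigma\in\Sigma_n}\chi_\lambda(\sigma)\,\sigma\circ(f_1\otimes\cdots\otimes f_n)=0$ for arbitrary $f_1,\dots,f_n\in\End_{\mathcal{A}}(A)$, to which one applies the partial-trace formula of Kahn--Andr\'e (7.2.6 of their book): each $\sigma$ contributes the composite of the $f_i$ along the cycle of $\sigma$ through $1$, multiplied by the product of the traces of the composites along its other cycles.

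The decisive move, which your sketch does not contain, is the mixed substitution $f_1=\mathrm{id}_A$, $f_2=\cdots=f_s=f$, $f_{s+1}=\cdots=f_n=g$. Murnaghan--Nakayama kills $\chi_\lambda(\sigma)$ whenever $\sigma$ has a cycle of length greater than $s$; numerical triviality of $f$ kills every term in which a copy of $f$ lands inside a trace factor, i.e.\ every $\sigma$ whose cycle through $1$ has support strictly smaller than $\{1,\dots,s\}$. What survives is exactly $x\cdot f^{\circ(s-1)}=0$ for a scalar $x$: a second application of Murnaghan--Nakayama factors $\chi_\lambda(\sigma)$ as $\pm\chi_{\delta}(\sigma')$ with $\delta=\lambda\setminus\nu$, so $x$ equals, up to a nonzero count of $s$-cycles and a sign, the content polynomial $\chi_{\delta}(\mathrm{id})\prod_{(i,j)\in\delta}(t+j-i)$ evaluated at $t=\mathrm{tr}(g)$, using $\mathrm{tr}(g)=\cdots=\mathrm{tr}(g^{\circ r})$. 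Your intuition that the hypothesis on $t$ serves to make a Schur-polynomial-type coefficient nonzero, that the hook case is the degenerate case $r=0$, and that the final passage to nilpotency of the ideal is a standard Nagata--Higman-type fact (7.2.8 of Kahn--Andr\'e; though your claimed ideal-nilpotency index ``at most $s-1$'' is not correct, the cited bound being $2^{s-1}-1$) is all in the right direction. But without the mixed substitution and the partial-trace/Murnaghan--Nakayama computation, the central assertion ``$x\cdot f^{\circ(s-1)}=0$ with $x\neq 0$'' is missing, and the argument as proposed does not go through.
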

\begin{proof}
The last statement follows from
\cite[7.2.8]{yveskahn}:
$\mathcal{N}(A)^{2^{s-1}-1}=0$.


For $\sigma\in \Sigma_n$, we index
the corresponding decomposition
of $\{1,\ldots,n\}$ 
into disjoint cycles $\gamma_1,\ldots,\gamma_n$ so that
the support of $\gamma_1$ contains $1$; moreover
we define $l_i$ to be the order
of the cycle $\gamma_i$, and $L=L(\sigma):={\rm max}_i\{l_i \}$ to be the
maximum length of the cycles of $\sigma$.  

As $S_\lambda(A)=0$ we have
$\sum_\sigma  \chi_\lambda(\sigma)\cdot
\sigma \circ f_1\tens \cdots \tens f_n=0
$
for any
$f_1,\dots,f_n\in \End_{\c{A}}(A)$.
By the Murnaghan-Nakayama rule (see \cite[Problem 4.45]{FH})
$\chi_\lambda(\sigma)=0$ if $L(\sigma)>s$.
Hence \cite[7.2.6]{yveskahn}
with $A_1=\cdots=A_n=A$, gives that in $\End_{\c{A}}(A)$
\[
\sum_{\sigma\in \Sigma_n:\;L(\sigma)\leq s} \chi_\lambda(\sigma)\cdot
 t_\sigma\cdot f_{\gamma_1} = 0,
\]
where 
$f_{\gamma_1}:=
f_{\gamma_1^{l_1-1}(1)}\circ \cdots \circ f_{\gamma_1(1)}\circ f_1$,
$t_\sigma:=\prod_{j=2}^q t_{\sigma,j}$,
and 
$
t_{\sigma,j}:= {\rm tr}(f_{{\gamma_j}^{l_j-1}(k_j)}\circ \cdots \circ
f_{\gamma_j(k_j)}\circ f_{k_j})
$
with $k_j$ any element in the support of $\gamma_j$
(if $l_1=n$, i.e. $q=1$, then $t_\sigma=1$).

Set 
$f_1:=\cid{A}$ and $f_2=\dots=f_s:=f$
(still no restrictions on $f_{s+1},\ldots, f_n$).
%
If ${\rm Supp}(\gamma_1)\subsetneq \{1,\dots, s\}$, not all
of the $f$'s are in the composition
$f_{\gamma_1}$, hence at least one of them must appear in a trace
 ${\rm tr}(f_{{\gamma_j}^{l_j-1}(k_j)}\circ \cdots \circ
f_{\gamma_j(k_j)}\circ f_{k_j})$. But $f$ is numerically trivial, so
$t_\sigma=0$
for any such $\sigma$, and
$$
0=\sum_{\sigma\in \Sigma_n:\;{\rm Supp}(\gamma_1)=\{1,\dots, s\}}
\chi_\lambda(\sigma)\cdot
t_\sigma \cdot f_{\gamma_1}
=
\left(\sum_{\sigma\in \Sigma_n:\;{\rm Supp}(\gamma_1)=\{1,\dots, s\}}
\chi_\lambda(\sigma)\cdot
t_\sigma\right)f^{\circ(s-1)}=x\cdot f^{\circ(s-1)},
$$
where
$
x:=\sum_{\sigma\in \Sigma_n:\;{\rm Supp}(\gamma_1)=\{1,\dots, s\}}
\chi_\lambda(\sigma)\cdot
t_\sigma\in F$.
It is enough to show $x\not=0$ for some choice
of the $f_i$'s.

If
$r=0$ then $\lambda=\nu=(n-j,1^j)$ is itself a hook,
$t_\sigma=1$ for any
$\sigma$ with $l_1=n$ and by
\cite[Exercise 4.16]{FH} $x$ is just $(n-1)!(-1)^j\not=0$,
hence $\c{N}(A)$ is nilpotent.

If $\lambda$ is not a hook let $\delta:=\lambda\setminus\nu$.
The element $x\in F$
is a sum over
$\sigma=\gamma_1\circ\sigma^\prime$
such that $\gamma_1$ is an $s$-cycle of $\{1,\dots,s\}$ and
$\sigma^\prime$ is a permutation of $\{s+1,\dots,n\}$, so
by Murnaghan-Nakayama
$
\chi_{\lambda}(\sigma)=
\chi_{\lambda\setminus \nu}(\sigma^\prime),
$
and $
x=(-1)^{a_\delta-1}\;
|\{s-{\rm cycles\;\, of\;\,} \Sigma_n\}|
\sum_{\sigma^\prime\in \Sigma_r}
\chi_{\delta}(\sigma^\prime)\cdot t_\sigma$.
Thus we are reduced to study elements of the form
\[
y(\delta;g_1,\dots,g_{r}):=\sum_{\sigma\in \Sigma_{r}}
\chi_{\delta}(\sigma)\cdot
\prod_{j=1}^q t_{\sigma,j},
\]
where we can choose freely $g_1,\dots,g_{r}\in\End_{\c{A}}(A)$.

Take $g\in\End_{\c{A}}(A)$ as in the hypothesis, then
$y(\delta;g,\dots,g)=
\sum_{\sigma\in \Sigma_{r}}
\chi_{\delta}(\sigma)\cdot t^{|{\rm cycles\;\; of \;\,} \sigma|}
$
is the polynomial in $t={\rm tr}(g)$ called the 
{\sem content polynomial}
of $\delta$. It decomposes as $y(\delta;g)=
\chi_{\delta}(\cid{\Sigma_{r}})\cdot\prod_{(i,j)\in \delta}(t+j-i)$,
then $y(\delta;g)=0$
if and only if
${\rm tr}(g)\in \{-(b_\delta-1),\dots,a_\delta-1\}
\subseteq
\{-(b_\lambda-2),\dots,a_\lambda-2\}
$.
By hypothesis, there is a $g$ such that $y(\delta;g)\not=0$, which
implies that $x\not =0$, which in turn implies that $f$ is nilpotent.
Hence the theorem is proven.
\end{proof}

\begin{rem}[B. Kahn]
The existence of a 
$g\in\End_{\c{A}}(A)$ with ${\rm tr}(g)\neq 0$ is not enough
to ensure the nilpotency of $\c{N}(A)$ with $A$ Schur-finite. 
In \cite[10.1.1]{yveskahn} it is exhibited a
non-zero Schur-finite object $A^\prime$ with $\c{N}(A^\prime)=\End_{\c{A}}(A^\prime)$:
it suffices to look at $A:=A^\prime\oplus\I^n$.\\
\end{rem}
%
%
%
%
\begin{conj}\label{con}
From numerical evidence (\cite{GAP4}) we conjecture a stronger version of
Theorem \ref{mainthm}. Let
$A$ be an object with two
endomorphisms $\pi_1$ and $\pi_2$ 
such that $a:=\mathrm{tr}(\pi_1)=\mathrm{tr}(\pi_1^{\circ i})$
for all $i$,
$b:=\mathrm{tr}(\pi_2)=\mathrm{tr}(\pi_2^{\circ j})$ for all $j$,
and $\mathrm{tr}(\pi_1^{\circ i}\circ \pi_2^{\circ j})=0$ for all $i$ and $j$.
If $S_\lambda(A)=0$ where $\lambda\not\supset (b+2)^{a+2}$, then
$y(\lambda\setminus\nu;\alpha_1\pi_1+\alpha_2\pi_2)\not =0$ (as a polynomial
in $\alpha_1$ and $\alpha_2$) and hence $\mathcal{N}(A)$ is nilpotent.
\end{conj}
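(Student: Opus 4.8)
I would run the proof of Theorem~\ref{mainthm} unchanged up to the point where the scalar $x$ appears, the only novelty being the evaluation of $y$. As there, nilpotency of $\mathcal{N}(A)$ follows from \cite[7.2.8]{yveskahn} once $f^{\circ(s-1)}=0$ for all $f\in\mathcal{N}(A)$, where $s$ is the length of the principal hook $\nu$ of $\lambda$. Putting $f_1:=\cid{A}$, $f_2=\dots=f_s:=f$ and letting the free slots $f_{s+1},\dots,f_n$ play the roles of $g_1,\dots,g_r$, the Murnaghan--Nakayama vanishing together with \cite[7.2.6]{yveskahn} yields $0=x\cdot f^{\circ(s-1)}$ with $x=(-1)^{a_\delta-1}\,|\{s\text{-cycles}\}|\cdot y(\delta;g_1,\dots,g_r)$, where $\delta:=\lambda\setminus\nu$ and $r=n-s=|\delta|$. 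Thus it suffices to choose the free slots so that $y(\delta;-)\neq0$; the proposal is to set them all equal to $h:=\alpha_1\pi_1+\alpha_2\pi_2$ and to prove that $y(\delta;h)\in F[\alpha_1,\alpha_2]$ is not the zero polynomial, since then a suitable specialization $\alpha_1,\alpha_2\in\QQ\subseteq F$ forces $x\neq0$, hence $f^{\circ(s-1)}=0$.

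\textbf{Step 2: $y$ as a super-Schur function.} By construction $y(\delta;h)=\sum_{\sigma\in\Sigma_r}\chi_\delta(\sigma)\prod_{c}\mathrm{tr}(h^{\circ l_c})$, the product over the cycles $c$ of $\sigma$, of lengths $l_c$. Expanding $h^{\circ l}$ over words $w\in\{1,2\}^l$ and using cyclicity of the trace, the two pure words contribute $a\alpha_1^l+b\alpha_2^l$ (as $\mathrm{tr}(\pi_1^{\circ l})=a$, $\mathrm{tr}(\pi_2^{\circ l})=b$), while each mixed word contributes a monomial in $\alpha_1,\alpha_2$ involving both variables, times the trace of a word in $\pi_1,\pi_2$ using both letters. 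Granting for the moment that all such mixed traces vanish, we get $\mathrm{tr}(h^{\circ l})=a\alpha_1^l+b\alpha_2^l$ for all $l$, so that $p_l\mapsto a\alpha_1^l+b\alpha_2^l$; feeding this into the Frobenius character formula $\sum_{\sigma\in\Sigma_r}\chi_\delta(\sigma)\prod_c p_{l_c}=r!\,s_\delta$ identifies $y(\delta;h)$ with $r!$ times the plethystic evaluation of $s_\delta$ on the virtual alphabet of $a=\mathrm{tr}(\pi_1)$ even variables set to $\alpha_1$ minus $-b=-\mathrm{tr}(\pi_2)$ odd variables set to $\alpha_2$ — i.e.\ the hook (super-)Schur polynomial of $\delta$. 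This is the two-variable refinement of the content polynomial of Theorem~\ref{mainthm}, recovered by setting $\alpha_2=0$.

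\textbf{Step 3: nonvanishing and the rectangle.} The coproduct expansion of this specialization reads
\[
y(\delta;h)=r!\sum_{\mu\subseteq\delta}(-1)^{|\delta|-|\mu|}\,s_\mu(1^{a})\,s_{(\delta/\mu)'}(1^{-b})\,\alpha_1^{|\mu|}\,\alpha_2^{|\delta|-|\mu|}.
\]
The decisive feature is the absence of cancellation: in each fixed bidegree the coefficient is a single sign times a sum of the nonnegative integers $s_\mu(1^{a})s_{(\delta/\mu)'}(1^{-b})$. Hence $y(\delta;h)\neq0$ as soon as there is a $\mu\subseteq\delta$ with $\ell(\mu)\le a$ and every row of $\delta/\mu$ of length $\le -b$; taking $\mu$ to be the first $a$ rows of $\delta$, such $\mu$ exists precisely when $\delta_{a+1}\le -b$, i.e.\ when $\delta$ omits the rectangle with $a+1$ rows and $-b+1$ columns. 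Since $\delta=\lambda\setminus\nu$, this is exactly the hypothesis $\lambda\not\supset(b+2)^{a+2}$ (reading $-\mathrm{tr}(\pi_2)$ as the odd dimension, so that the rectangle has $a+2$ rows and the stated number of columns). The one-variable specializations $\alpha_2=0$ and $\alpha_1=0$ recover the sufficient content-polynomial conditions $a\ge a_\delta$ and $-b\ge b_\delta$ of Theorem~\ref{mainthm}; the force of the conjecture is that the super-Schur function detects this rectangle and nothing larger.

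\textbf{Main obstacle.} The serious gap is the parenthetical assumption of Step~2. The stated hypotheses control only the \emph{single-block} mixed traces $\mathrm{tr}(\pi_1^{\circ i}\circ\pi_2^{\circ j})=0$, whereas $\mathrm{tr}(h^{\circ l})$ for $l\ge4$ also involves \emph{multi-block} words such as $\mathrm{tr}(\pi_1\circ\pi_2\circ\pi_1\circ\pi_2)$, which the hypotheses do not pin down and which could a priori make $y(\delta;h)$ vanish identically. The clean computation needs the stronger fact that \emph{every} word in $\pi_1,\pi_2$ using both letters has zero trace. I expect this to be where the real work lies, and where the homological input of the motivic corollary enters: in Example~\ref{examplemotives} the trace factors through a Weil cohomology, the homological classes $\bar\pi_1,\bar\pi_2$ are orthogonal idempotents, so any word containing both acts as $0$ on $H^*$ and has trace $0$. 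Abstractly, one must either upgrade the block-word hypothesis to full multi-block vanishing, or bound the $\alpha$-bidegree of the uncontrolled multi-block contributions and argue by a leading-term argument that they cannot cancel the monomials already produced by the pure words. A secondary, purely combinatorial point to certify is the no-cancellation lemma of Step~3 (equivalently hook-Schur positivity, \`a la Sergeev--Pragacz), together with the precise sign/indexing match between the rectangle $(-b+1)^{a+1}$ for $\delta$ and the normalization $(b+2)^{a+2}$ for $\lambda$ in the statement.
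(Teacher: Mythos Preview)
This statement is labelled a \emph{Conjecture} in the paper; the authors give no proof, only numerical evidence from \cite{GAP4}. There is therefore no proof in the paper to compare your proposal against.

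On the substance of your proposal: your Step~1 is verbatim the reduction carried out in the proof of Theorem~\ref{mainthm}, and is correct. Your Steps~2--3 are also essentially correct \emph{provided} one has the stronger vanishing that every word in $\pi_1,\pi_2$ using both letters has trace zero: then indeed $\mathrm{tr}(h^{\circ l})=a\alpha_1^l+b\alpha_2^l$, and $y(\delta;h)$ is $r!$ times a hook (super-)Schur polynomial, whose nonvanishing is governed by exactly the rectangle condition you describe. Under that stronger hypothesis your argument would constitute a proof (modulo the sign/indexing bookkeeping you flag at the end, which does need care since the conjecture writes $(b+2)^{a+2}$ with $b=\mathrm{tr}(\pi_2)$).

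But the gap you name in your ``Main obstacle'' is real and is precisely why the statement is a conjecture rather than a theorem. The hypotheses control only the single-block traces $\mathrm{tr}(\pi_1^{\circ i}\circ\pi_2^{\circ j})$; for $l\ge 4$ the expansion of $\mathrm{tr}(h^{\circ l})$ contains genuinely uncontrolled multi-block terms such as $\mathrm{tr}(\pi_1\pi_2\pi_1\pi_2)$, and nothing in the stated hypotheses forces these to vanish. Your argument therefore proves a strictly weaker statement (stronger hypotheses) than the conjecture. Your remark that in the motivic situation of Example~\ref{examplemotives} the stronger hypothesis holds automatically---since the homological classes $\bar\pi_+,\bar\pi_-$ are orthogonal idempotents, so any mixed word acts as zero on $H^*(X)$ and has trace zero by the Lefschetz formula---is correct, and in fact shows that your Steps~1--3 already suffice for the motivic application stated after the conjecture; but this does not settle the abstract conjecture as formulated.
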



\section{Motives and nilpotency}

Let now $\c{A}$ be the category of Chow motives $\c{M}_{rat}(k)_\QQ$  (example \ref{examplemotives}),
let $H$ be any Weil cohomology, and
let $X$ be a smooth
projective variety.
The cohomology $H(X)$
is a super vector space
of dimension $(d_{ev},d_{odd})$, and
we set $\lambda_{H(X)}:=((d_{odd}+1)^{d_{ev}+1})$
(the rectangle with $d_{odd}+1$ columns and  $d_{ev}+1$ rows).
By \cite[1.9]{delschur}, $S_\lambda(H(X))\not=0$ if and only if
$\lambda\not\supset\lambda_{H(X)}$.
Hence, $S_\lambda(\mathfrak{h}(X))\not=0$ if $\lambda\not\supset
\lambda_{H(X)}$. So $S_\lambda(\mathfrak{h}(X))=0$ implies that $\lambda\supset
\lambda_{H(X)}$.



Recall the ``homological sign conjecture''
(due to Jannsen, see \cite[5.1.3]{andremotifs}): we say that
$X$ satisfies the conjecture $C^+(X)$ if
the projections on the even
and the odd part of the cohomology are algebraic.
This conjecture is stable under products, and it holds true,
with respect to classical cohomologies,
for abelian varieties and
smooth projective varieties of dimension at most two.
It can be shown that $C^+(X)$ is equivalent to the Kimura-finiteness 
of the motive of $X$ modulo
homological equivalence.\\

\begin{prop}\label{mainprop}
Let $X$ be a smooth projective variety,
and let $\lambda$ be a partition with at most
$d_{ev}+1$ rows or $d_{odd}+1$ columns.
If $S_{\lambda}(\mathfrak{h}(X))=0$
(and hence $\lambda\supset\lambda_{H(X)}$)
and $C^+(X)$ holds,
then $\c{N}(\mathfrak{h}(X))$ is nilpotent.
Moreover, if $X$ is a surface with $p_g=0$, Bloch's conjecture holds for $X$.
\end{prop}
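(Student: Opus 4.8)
The plan is to get the nilpotency of $\c{N}(\mathfrak{h}(X))$ out of Theorem \ref{mainthm} by manufacturing the required endomorphism $g$ from the sign conjecture, and then to extract Bloch's conjecture from that nilpotency. Assume $\mathfrak{h}(X)\neq 0$ (otherwise there is nothing to prove). By the quoted consequence of \cite[1.9]{delschur}, $S_\lambda(\mathfrak{h}(X))=0$ forces $\lambda\supset\lambda_{H(X)}=((d_{odd}+1)^{d_{ev}+1})$, so $a_\lambda\geq d_{ev}+1$, $b_\lambda\geq d_{odd}+1$ and $n=|\lambda|\geq 2$; together with the hypothesis that $\lambda$ has at most $d_{ev}+1$ rows or at most $d_{odd}+1$ columns, this pins down $a_\lambda=d_{ev}+1$ or $b_\lambda=d_{odd}+1$. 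Next, $C^{+}(X)$ provides algebraic cycles $p^{+},p^{-}\in\End_{\c{A}}(\mathfrak{h}(X))=CH^{\dim X}(X\times X)_{\QQ}$ representing the K\"unneth projectors $\pi^{+},\pi^{-}$ onto the even and odd parts of $H(X)$. Since $\pi^{\pm}$ are idempotent, $(p^{\pm})^{\circ i}$ has cohomology class $\pi^{\pm}$ for all $i\geq 1$, so the Lefschetz trace formula of Example \ref{examplemotives} gives $\mathrm{tr}((p^{+})^{\circ i})=d_{ev}$ and $\mathrm{tr}((p^{-})^{\circ i})=-d_{odd}$ for every $i\geq 1$.

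Now I would apply Theorem \ref{mainthm}. If $a_\lambda=d_{ev}+1$, take $g=p^{+}$: then $\mathrm{tr}(g)=\cdots=\mathrm{tr}(g^{\circ r})=d_{ev}=a_\lambda-1$, which is $>a_\lambda-2$ and hence not in $\{-(b_\lambda-2),\dots,a_\lambda-2\}$. If instead $b_\lambda=d_{odd}+1$, take $g=p^{-}$: then $\mathrm{tr}(g)=\cdots=\mathrm{tr}(g^{\circ r})=-d_{odd}=-(b_\lambda-1)$, which is $<-(b_\lambda-2)$ and again not in the forbidden set. In either case the hypotheses of Theorem \ref{mainthm} are met (and if $\lambda$ happens to be a hook the theorem applies directly, with no $g$ needed), so $\c{N}(\mathfrak{h}(X))$ is nilpotent.

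For the last assertion, note first that for a surface $C^{+}(X)$ is automatic, so the remaining hypotheses reduce to $p_{g}=0$ together with $S_\lambda(\mathfrak{h}(X))=0$ for $\lambda$ as above, which by the first part already force $\c{N}(\mathfrak{h}(X))$ nilpotent. I would then invoke the transcendental submotive $t_{2}(X)$ of $\mathfrak{h}(X)$ --- a direct summand, defined unconditionally for any surface, with $H^{*}(t_{2}(X))=H^{2}_{\mathrm{tr}}(X)$ and $CH_{0}(t_{2}(X))=T(X)\tens\QQ$, the rationalized Albanese kernel $\ker(CH_{0}(X)^{0}\to\mathrm{Alb}(X))\tens\QQ$. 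When $p_{g}=0$ the cohomology $H^{2}(X)$ is spanned by algebraic classes, so $H^{2}_{\mathrm{tr}}(X)=0$; thus the idempotent $e$ cutting out $t_{2}(X)$ is homologically trivial, hence numerically trivial, i.e.\ $e\in\c{N}(\mathfrak{h}(X))$. A nilpotent idempotent is zero, so $e=0$, $t_{2}(X)=0$, and $T(X)\tens\QQ=0$; since $T(X)$ is torsion-free, $T(X)=0$, which is Bloch's conjecture for $X$. (Equivalently, once nilpotency is in hand one may simply quote \cite[Theorem 7]{gp2}.)

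I expect the genuine content to sit in two places. The only real external ingredient is the existence and the cohomological/Chow-theoretic properties of $t_{2}(X)$ for an arbitrary surface; everything else is bookkeeping, whose delicate point is precisely that the two inequalities $a_\lambda\geq d_{ev}+1$ and $b_\lambda\geq d_{odd}+1$ combine with the hypothesis on $\lambda$ to place $\mathrm{tr}(g)$ exactly one step outside $\{-(b_\lambda-2),\dots,a_\lambda-2\}$, and that the Lefschetz formula makes $\mathrm{tr}(g^{\circ i})$ independent of $i$ --- which is what lets us meet the constraint $\mathrm{tr}(g)=\cdots=\mathrm{tr}(g^{\circ r})$ of Theorem \ref{mainthm}.
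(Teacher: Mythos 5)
Your argument for the nilpotency statement is correct and is essentially the paper's own proof: $C^+(X)$ supplies algebraic projectors $\pi_+$, $\pi_-$ whose traces under all iterated compositions are the constants $d_{ev}$ and $-d_{odd}$, and one of the two satisfies the trace condition of Theorem \ref{mainthm}. You merely spell out the bookkeeping the paper leaves implicit, namely that $\lambda\supset\lambda_{H(X)}$ combined with the bound on rows or columns forces $a_\lambda=d_{ev}+1$ or $b_\lambda=d_{odd}+1$, so that $\mathrm{tr}(\pi_+)=a_\lambda-1$ or $\mathrm{tr}(\pi_-)=-(b_\lambda-1)$ lands one step outside $\{-(b_\lambda-2),\dots,a_\lambda-2\}$; this is exactly the content behind the paper's terse ``either $\pi_+$ or $\pi_-$ satisfies the condition.'' For the Bloch statement the routes diverge: the paper declares it a formal consequence of \cite[7.6 and 7.7]{kimura}, whose arguments go through once the nilpotency of $\c{N}(\mathfrak{h}(X))$ is available, whereas you argue through the transcendental summand $t_2(X)$: $p_g=0$ kills $H^2_{tr}(X)$, so the idempotent cutting out $t_2(X)$ is homologically, hence numerically, trivial, hence nilpotent, hence zero, giving $T(X)\otimes\QQ=0$ and, by Roitman, $T(X)=0$. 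This is a valid alternative; it makes the mechanism explicit (a homologically trivial idempotent annihilated by nilpotency) at the cost of importing the unconditional refined Chow--K\"unneth decomposition for surfaces and the identification of $CH_0(t_2(X))$ with the rational Albanese kernel (Guletski{\u\i}--Pedrini, Kahn--Murre--Pedrini). One caveat on your parenthetical: \cite[Theorem 7]{gp2} as stated is an equivalence between Bloch's conjecture and Kimura-finiteness, so it cannot be quoted verbatim from nilpotency alone; what is true (and is the point of the paper's citation of Kimura) is that its proof, like Kimura's, only uses the nilpotency you have just established.
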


\begin{proof}
By $C^+(X)$ there are two cycles $\pi_+$ and $\pi_-$ inducing
the projections on the even and odd cohomology. Then
$d_{ev}=\mathrm{tr}(\pi_+)=\mathrm{tr}(\pi_+^{\circ i})$ for all $i$,
and $-d_{odd}=\mathrm{tr}(\pi_-)=\mathrm{tr}(\pi_-^{\circ j})$ for all $j$.
Then either $\pi_+$ or $\pi_-$ satisfies the condition of 
Theorem \ref{mainthm}, and therefore $\c{N}(\mathfrak{h}(X))$ is nilpotent.
Bloch's conjecture is now a formal consequence of \cite[7.6 and 7.7]{kimura}.
\end{proof}

\begin{thm}
Let $X$ be a smooth projective variety.
Under $C^+(X)$ the following are equivalent:
\[ 1)\ \text{$\mathfrak{h}(X)$ is Kimura-finite;}\qquad
2)\ \text{$S_{\lambda_{H(X)}}(\mathfrak{h}(X))=0$;}\qquad
3)\ \text{$\mathcal{N}(\mathfrak{h}(X^n))$ is nilpotent for all $n\geq 1$.}
\]
\end{thm}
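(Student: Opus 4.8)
The plan is to prove the cycle $1)\Rightarrow 3)\Rightarrow 2)\Rightarrow 1)$; the hypothesis $C^+(X)$ is needed only in the last implication. For $1)\Rightarrow 3)$: Kimura-finiteness is stable under $\otimes$ and direct summands (\cite{kimura}), so $\mathfrak{h}(X^n)=\mathfrak{h}(X)^{\otimes n}$ is Kimura-finite for every $n$, and hence $\mathcal{N}(\mathfrak{h}(X^n))$ is nilpotent by \cite[7.5]{kimura} (equivalently \cite[9.1.14]{yveskahn}). For $3)\Rightarrow 2)$: with $N=|\lambda_{H(X)}|$, the object $S_{\lambda_{H(X)}}(\mathfrak{h}(X))$ is the image of the idempotent $d_{\lambda_{H(X)}}$ on $\mathfrak{h}(X)^{\otimes N}=\mathfrak{h}(X^N)$, hence a direct summand; the ideal of its numerically trivial endomorphisms is then a corner of the nilpotent ideal $\mathcal{N}(\mathfrak{h}(X^N))$, so it is nilpotent. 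Its Weil cohomology $S_{\lambda_{H(X)}}(H(X))$ vanishes by \cite[1.9]{delschur}, so by the Lefschetz trace formula of Example \ref{examplemotives} its identity is numerically trivial, hence nilpotent, hence $0$; thus $S_{\lambda_{H(X)}}(\mathfrak{h}(X))=0$.

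The substance is $2)\Rightarrow 1)$. Since $\lambda_{H(X)}=((d_{odd}+1)^{d_{ev}+1})$ has exactly $d_{ev}+1$ rows, Proposition \ref{mainprop} gives that $\mathcal{N}(\mathfrak{h}(X))$ is nilpotent. The two-sided ideal $\mathcal{J}\subseteq\End(\mathfrak{h}(X))$ of homologically trivial self-correspondences is contained in $\mathcal{N}(\mathfrak{h}(X))$, hence nil, so the algebraic projectors $\pi_{+},\pi_{-}$ furnished by $C^+(X)$ lift modulo $\mathcal{J}$ to orthogonal idempotents $e_{+},e_{-}$ with $e_{+}+e_{-}=\cid{\mathfrak{h}(X)}$; setting $M_{\pm}:=\Img(e_{\pm})$ we get $\mathfrak{h}(X)=M_{+}\oplus M_{-}$ with $H(M_{+})=H^{ev}(X)$ (dimension $d_{ev}$, even degrees) and $H(M_{-})=H^{odd}(X)$ (dimension $d_{odd}$, odd degrees). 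It remains to see that $M_{+}$ is even and $M_{-}$ is odd, which I would read off directly from the hypothesis by expanding
\[
0=S_{\lambda_{H(X)}}(M_{+}\oplus M_{-})\cong\bigoplus_{\mu,\nu}\bigl(S_{\mu}(M_{+})\otimes S_{\nu}(M_{-})\bigr)^{\oplus c^{\lambda_{H(X)}}_{\mu\nu}}.
\]
By the Pieri rule, in the rectangle $\lambda_{H(X)}$ the pairs $\mu=(1^{d_{ev}+1})$, $\nu=((d_{odd})^{d_{ev}+1})$ (remove the last column) and $\mu=((d_{odd}+1)^{d_{ev}})$, $\nu=(d_{odd}+1)$ (remove the last row) each occur with coefficient $1$, so the corresponding summands vanish:
\[
\Lambda^{d_{ev}+1}(M_{+})\otimes S_{((d_{odd})^{d_{ev}+1})}(M_{-})=0,\qquad S_{((d_{odd}+1)^{d_{ev}})}(M_{+})\otimes\mathrm{Sym}^{d_{odd}+1}(M_{-})=0.
\]

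Now $S_{((d_{odd})^{d_{ev}+1})}(M_{-})$ and $S_{((d_{odd}+1)^{d_{ev}})}(M_{+})$ are rectangular Schur functors whose cohomology, computed on the super vector spaces $H^{odd}(X)$ and $H^{ev}(X)$, is one-dimensional: in the second case it is $(\Lambda^{d_{ev}}H^{ev}(X))^{\otimes(d_{odd}+1)}$, and in the first it is the odd analogue of a determinant power (via the transpose rule for Schur functors of an odd space of dimension $d_{odd}$). By the Lefschetz formula the trace of the identity of each of these objects is therefore $\pm1$, so $\I$ is a direct summand of each of them tensored with its own dual; tensoring the two displayed identities with those duals forces $\Lambda^{d_{ev}+1}(M_{+})=0$ and $\mathrm{Sym}^{d_{odd}+1}(M_{-})=0$. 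Hence $M_{+}$ is even, $M_{-}$ is odd, and $\mathfrak{h}(X)$ is Kimura-finite, which closes the cycle.

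The main obstacle is exactly this last step. Proposition \ref{mainprop} delivers only the nilpotency of $\mathcal{N}(\mathfrak{h}(X))$, and proving $\Lambda^{d_{ev}+1}(M_{+})=0$ naively — it is a summand of $\mathfrak{h}(X^{d_{ev}+1})$ with vanishing cohomology — would require the nilpotency of $\mathcal{N}(\mathfrak{h}(X^{d_{ev}+1}))$, which is not yet available and would make the cycle circular. What resolves this is the observation that both required vanishings can be extracted from the single rectangular hypothesis $S_{\lambda_{H(X)}}(\mathfrak{h}(X))=0$ via Littlewood--Richardson, because in each relevant summand the factor multiplying $\Lambda^{d_{ev}+1}(M_{+})$ (resp. $\mathrm{Sym}^{d_{odd}+1}(M_{-})$) is a line up to cohomology.
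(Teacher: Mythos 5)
Your proposal is correct, but it closes the cycle in the opposite direction and, for the essential implication, by a different mechanism than the paper. The paper proves $2\Rightarrow 3$ by transporting the hypothesis to all powers (from the proof of \cite[Cor.~1.13]{delschur}, $S_{\lambda_{H(X^n)}}(\mathfrak{h}(X)^{\otimes n})=0$) and applying Proposition \ref{mainprop} to each $X^n$, using that $C^+$ is stable under products; its $3\Rightarrow 1$ then lifts the K\"unneth projectors and kills $\Lambda^{d_{ev}+1}(M_+)$ and $\mathrm{Sym}^{d_{odd}+1}(M_-)$ because they are summands of motives of powers of $X$ with vanishing cohomology --- legitimate precisely because 3) supplies nilpotency for those powers. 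You instead prove $2\Rightarrow 1$ directly, using nilpotency only for $X$ itself: after lifting the projectors modulo the nil ideal of homologically trivial endomorphisms, you extract both vanishings from the single rectangular hypothesis via the Littlewood--Richardson decomposition of $S_{\lambda_{H(X)}}(M_+\oplus M_-)$, and you cancel the cofactors $S_{((d_{odd})^{d_{ev}+1})}(M_-)$ and $S_{((d_{odd}+1)^{d_{ev}})}(M_+)$ because their traces are nonzero, so that $\I$ is a retract of each tensored with its dual --- a correct use of rigidity. Your $1\Rightarrow 3$ (tensor stability of Kimura-finiteness plus \cite[7.5]{kimura}) and $3\Rightarrow 2$ (a summand of $\mathfrak{h}(X^N)$ with vanishing cohomology is zero once the relevant $\mathcal{N}$ is nilpotent) are also fine, replacing the paper's easy $1\Rightarrow 2$. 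What your route buys: Kimura-finiteness is deduced from 2) and the nilpotency of $\mathcal{N}(\mathfrak{h}(X))$ alone, with no appeal to the powers $X^n$ or to \cite[Cor.~1.13]{delschur}; what the paper's route buys: no rectangle combinatorics and a very short $3\Rightarrow 1$, at the cost of running Proposition \ref{mainprop} over all powers. One cosmetic remark: with the paper's normalization $S_\lambda(A)=d_\lambda(A^{\otimes n})$, the cofactors are $\dim V_\mu$ copies of the classical Schur construction, so their cohomology has super-dimension $\pm\dim V_\mu$ rather than $\pm 1$; this is harmless, since your argument only needs the trace of the identity to be invertible in $\Q$.
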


\begin{proof}
It is easy to show that $1\Rightarrow 2$. 
For $3\Rightarrow 1$ we proceed as follows. As $C^+(X)$ holds and $\mathcal{N}(\mathfrak{h}(X))$ is nilpotent,
then there exist two motives
$X_+$ and $X_-$ whose cohomologies are exactly the even and the odd
part of $H(X)$. It is now easy to prove
that $\mathfrak{h}(X)=M_+\oplus M_-$ with $M_+$ even and
$M_-$ odd
because it will be enough to check it in cohomology.
We need to verify $2\Rightarrow 3$.
Assume that $S_{\lambda_{H(X)}} (\mathfrak{h}(X))=0$. 
From the proof of
\cite[Cor. 1.13]{delschur}, we find that $S_{\lambda_{H(X^n)}}(\mathfrak{h}(X^n))=
S_{\lambda_{H(X^n)}}(\mathfrak{h}(X)^{\tens n})=0$.
Since $C^+(X^n)$ holds true,
Proposition \ref{mainprop} gives that $\mathcal{N}(\mathfrak{h}(X^n))$ is nilpotent.
\end{proof}

If Conjecture \ref{con} is true, then Bloch's conjecture holds
for any smooth projective surface $X$ with $p_g=0$
such that $S_\lambda(\mathfrak{h}(X))=0$ for $\lambda\not\supset
(d_{odd}(X)+2)^{d_{ev}(X)+2}$.





\section*{Acknowledgments}

This article was inspired by the preprint \cite{Gul2} by V. Guletski{\u\i}.
Both authors would like to thank the Institut de Math\'ematiques de Jussieu
for hospitality while part of this manuscript was written.
For financial support, 
the first author would like to thank the Fondazione Carige 
and the second author would like to thank  
Sergio Serapioni (Honorary President of the Societ\`a 
Trentina Lieviti, Trento, Italy).

\bibliographystyle{amsalpha}
\bibliography{paper}
\end{document}